\documentclass[12pt]{amsart}


\usepackage{amsfonts,amsmath,latexsym,amssymb,verbatim,amsbsy}
\usepackage{amsthm}

\usepackage{pstricks}



\theoremstyle{plain}
\newtheorem{THEOREM}{Theorem}[section]

\newtheorem{theorem}[THEOREM]{Theorem}
\newtheorem{corollary}[THEOREM]{Corollary}
\newtheorem{lemma}[THEOREM]{Lemma}

\theoremstyle{definition}

\theoremstyle{remark}


\newcommand{\thm}[1]{Theorem~\ref{#1}}
\newcommand{\lem}[1]{Lemma~\ref{#1}}

\newcommand{\bb}{\begin{equation}}
\newcommand{\ee}{\end{equation}}
\newcommand{\bq}{\begin{eqnarray}}
\newcommand{\eq}{\end{eqnarray}}
\newcommand{\bqn}{\begin{eqnarray*}}
\newcommand{\eqn}{\end{eqnarray*}}


\newcommand{\N}{\ensuremath{\mathbb{N}}}   
\newcommand{\R}{\ensuremath{\mathbb{R}}}   

\renewcommand{\S}{\ensuremath{\mathbb{S}}}


\def \a {\alpha}
\def \b {\beta}
\def \d {\delta}

\def \e {\varepsilon}
\def \f {\varphi}

\def \l {\lambda}

\def \n {\nabla}
\def \s {\sigma}
\def \th {\theta}

\def \w {\omega}




\def \p {\partial}
\def \ra {\rightarrow}

\def\ocirc#1{\ifmmode\setbox0=\hbox{$#1$}\dimen0=\ht0 \advance\dimen0
  by1pt\rlap{\hbox to\wd0{\hss\raise\dimen0
  \hbox{\hskip.2em$\scriptscriptstyle\circ$}\hss}}#1\else {\accent"17 #1}\fi}


 %
 %
\DeclareMathOperator{\diver}{div} %
\DeclareMathOperator{\curl}{curl} %

\def \reg {\mathcal{R}}

\begin{document}

\title[Self-similar collapse]{On formation of a locally self-similar collapse in the incompressible Euler equations}
\author{Dongho Chae}
\author{Roman Shvydkoy}
\thanks{The work of D.C. was supported partially by NRF
grant 2006-0093854, while the work of R.S. was partially supported
by NSF grant DMS--0907812.}
\address[D. Chae]{Chung-Ang University, Department of Mathematics,
Dongjak-gu Heukseok-ro 84, Seoul 156-756, Republic of Korea}%
\email{dchae@cau.ac.kr}
\address[R. Shvydkoy]{University of Illinois at Chicago, Department of Mathematics
(M/C 249), Chicago, IL 60607, USA} %
\email{shvydkoy@math.uic.edu}

\subjclass[2000]{Primary:76B03; Secondary:35Q31}

\begin{abstract}
The paper addresses the question of existence of a locally self-similar blow-up for the incompressible Euler equations. Several exclusion results are proved based on the $L^p$-condition for velocity or vorticity and for a range of scaling exponents. In particular, in $N$ dimensions if in self-similar variables $u \in L^p$  and $u \sim \frac{1}{t^{\a/(1+\a)}}$, then the blow-up does not occur provided $\a >N/2$ or $-1<\a\leq N/p$. This includes the $L^3$ case natural for the Navier-Stokes equations. For $\a = N/2$ we exclude profiles with an asymptotic power bounds of the form
$ |y|^{-N-1+\d} \lesssim |u(y)| \lesssim |y|^{1-\d}$. Homogeneous near infinity solutions are eliminated as well except when homogeneity is scaling invariant. 
\end{abstract}

\maketitle

\section{Introduction}

In the theory of weak solutions to the Navier-Stokes equation one of the cornerstone results is non-existence of self-similar blow-up for velocities in $L^3$ proved by Ne{\v{c}}as, R{\ocirc{u}}{\v{z}}i{\v{c}}ka, and {\v{S}}ver{\'a}k, \cite{nrs}, and further extended to the case of $L^p$, $p>3$, by
Tsai \cite{tsai}. This was followed by the celebrated
$L^{3,\infty}$-regularity criterion of Escauriaza, Seregin, and
\v{S}ver{\'a}k \cite{ess}. For its inviscid counterpart, the Euler equation, given by
\begin{equation}\label{ee}
\begin{split}
u_t + u \cdot \n u + \n p & =0\\
\n \cdot u & = 0,
\end{split}
\end{equation}
the self-similar blow-up has not yet been explored systematically in mathematical literature despite abundance of numerical data based on \eqref{ee} pointing to such possibility. Brachet et al \cite{bmvps} observe a pancake-like formation of vortex structures from Taylor-Green initial condition. Simulations of Kerr \cite{kerr} present
strong evidence of a singularity corresponding to scaling $u \sim \frac{1}{\sqrt{T-t}}$, the same as for the Navier-Stokes.
Boratav and Pelz \cite{bp} tests on Kida's high-symmetry flows
reveal self-similar evolution of a focusing vortex dodecapole, again in the same scaling. Similar collapse was further observed in vortex filament models of Pelz \cite{pelz}, Kimura
\cite{kimura}, Ng and Bhattacharjee \cite{ngb}, and others.
 
To describe the mathematical setup, let us assume that the fluid domain is $\R^N$, although other choices are possible. Suppose that near some point $x^* \in \R^N$ a solution, initially starting from a smooth data, organizes into a locally self-similar blowup. In other words, there is a $\rho_0 >0$ and time $T>0$ such that
\begin{equation}\label{ansatz}
\begin{split}
u(x,t) & = \frac{1}{(T - t)^{\frac{\a}{1+\a}}} v\left( \frac{x-x^*}{(T - t)^{\frac{1}{1+\a}}}\right) \\
p(x,t) & =  \frac{1}{(T - t)^{\frac{2\a}{1+\a}}} q\left( \frac{x-x^*}{(T - t)^{\frac{1}{1+\a}}}\right),
\end{split}
\end{equation}
for all $|x-x^*|<\rho_0$, and $t<T$ near $T$, and where $\a>-1$ to
insure focusing collapse. Observe that the vorticity near
singularity scales like $\w = \curl v \sim \frac{1}{T-t}$, making it a borderline case
for the Beal-Kato-Majda criterion \cite{bkm}. The Lipschitz constant of the vorticity direction field $\xi = \frac{\w}{|\w|}$ scales like $(T-t)^{-\frac{1}{1+\a}}$, again in no
contradiction with Constantin and Fefferman's criterion \cite{c,cf}.
In \cite{he-bdd,he-ext} Xinyu He shows existence of solutions to
self-similar equations \eqref{e:ss} on bounded and exterior domains
with $\a = 1$. On exterior domains solutions exhibit the power-like
decay similar to vortex models, $|v| \sim |y|^{-1}$, $|\n v| \sim
|y|^{-2}$ under the same scaling. Although these solutions belong to different settings, interestingly, their decay rate appears critical for our results below.  One can observe  that $\a = N/2$ is the only scaling consistent with the energy conservation for globally self-similar solutions if the helicity is not zero (\cite{cha0}, see also \cite{schonbek} for `pseudo self-similar solutions'). A study of self-similar blow-up in the settings adopted here was undertaken by the first
author in a series of works \cite{cha,chae-11,chae-asym,chae-note}. The main two results obtained are the
following. First, if $v\in L^p(\R^3)$, $p\geq \frac{9}{2}$, and $\a
= \infty$, then  $v = 0$. Second, if $\|\n v \|_\infty <\infty$ and
the vorticity belongs to $\cap_{0<p<p_0}L^p(\R^3)$, for some $p_0$,
while $\a > - 1$  is arbitrary, then $v$ is irrotational, $\w = 0$
throughout.

In this paper we develop a new set of criteria that exclude
locally self-similar collapse in physically relevant scalings.
Let us observe that if the total energy of $u$ is finite, then by
rescaling the energy in the ball $|x-x^*|\leq \rho_0$, we have the
bound
\begin{equation}\label{Nalpha}
\int_{|y| < L \rho_0} |v(y)|^2 \lesssim L^{N-2\a}, \text{ for all } L> L_0.
\end{equation}
Therefore, the case $\a> \frac{N}{2}$ is automatically excluded, while in the range $\a<\frac{N}{2}$ the energy of  $v$ may be unbounded. In all our results we avoid  using the assumption of finiteness of total energy keeping in mind, for instance, the 3D vortex filament models where the energy is naturally unbounded.  We therefore examine the full range of $\a > - 1 $ and integrability conditions $v \in L^p$ for a possible collapse. If $v\in L^p$, $p>2$, there are two special values of $\a$ to consider:  $\a = \frac{N}{p}$ for the fact that $\|u\|_p$ is conserved under the self-similar evolution on the open space, and $\a = N/2$ as the boundary between local energy inflation and deflation regimes (see \eqref{Nalpha}). We will see that the cases $-1 <\a\leq \frac{N}{p}$, $\frac{N}{p}< \a \leq \frac{N}{2}$, and $\a > \frac{N}{2}$ are in fact different in character, and we exclude solutions  under the following  conditions:
\begin{itemize}
\item[(i)] $v \in L^p\cap C^1_{loc}$, $p \geq 3$, and $-1 <\a\leq \frac{N}{p}$ or $\a > \frac{N}{2}$;
\item[(ii)] $v \in L^2 \cap C^1_{loc}$, $\a = \frac{N}{2}$, and for some $\d >0$ and $|y|$ large, one has
\begin{equation}\label{i:pow}
\frac{c}{|y|^{N+1 - \d}} \leq |v(y) | \leq C |y|^{1-\d}.
\end{equation}
\end{itemize}
The local $C^1$-condition is only needed for the local energy equality to hold, and is natural since we view $T$ as the first time of regularity loss. The local energy equality will be our starting point in most arguments, although somewhat unusually for a self-similar problem, we will employ the full time-dependent version of it to be able to make a non-self-similar choice for a test function. As a result the local energy equality takes the form
\begin{equation}\label{i:bound}
\frac{1}{L^{N -2\a}} \int_{|y| \leq  L} |v|^2dy \lesssim \frac{1}{l^{N -2\a}} \int_{|y| \leq  l} |v|^2 dy + \int_{l \leq |y| \leq L}  \frac{|v|^3 + |v||q|}{|y|^{N+1 - 2\a}} dy.
\end{equation}
As we remarked above in asymptotic character of terms in \eqref{i:bound} depends on the range of $\a$ considered. Nontheless, \eqref{i:bound} allows us to control the growth of the energy either by the $L^p$-norm of $v$ on the large scales in case (i) or through the use of power bounds on $v$ as in (ii). This gives an improved  bound on the trilinear integral in \eqref{i:bound} by interpolation. The general strategy will then be to bootstrap between the growth of $L^2$ and $L^3$  norms of $v$ over large balls $|y|<L$ via a repeated use of \eqref{i:bound} until eventually the energy over $|y|<L$ deplays a decay as $L\ra \infty$, implying $v = 0$. It is precisely for $\frac{N}{p}< \a \leq \frac{N}{2}$ when this algorithm fails to bootstrap. However, as a byproduct of the argument, we obtain
\begin{itemize}
\item[(iii)] if $v \in L^p\cap C^1_{loc}$, $p\geq 3$, and $\frac{N}{p}< \a \leq \frac{N}{2}$, then \eqref{Nalpha} holds.
\end{itemize}
So, the energy growth bound \eqref{Nalpha} is a natural internal feature of the blow-up, independent of the total energy assumption. In particular, if $v \in L^p$, $p\geq 3$, and $\a = \frac{N}{2}$, then automatically $v \in L^2$.

Coming back to the vortex models or He's solutions, notice that in those cases $v \in L^p$ for $p>3$ (even if only at infinity) while $\a = 1$. So, they appear to be critical for the scope of (i).

We exhibit several explicit homogeneous examples of solution pairs $(v,q)$, see \eqref{noq}, \eqref{e:natur}, \eqref{e:psf}, \eqref{e:point}, which although lacking sufficient local regularity to be fully qualified as counterexamples, serve as indicators that our arguments may be sharp.  In Theorem \ref{t:homo} we demonstrate  however that locally smooth homogenous at infinity solutions are trivial unless the homogeneity is consistent with the scaling, and even then the case $\a = N/2$ is excluded.
 
A criterion dimensionally equivalent to (i), but in terms of vorticity, is established using the self-similar equations in vorticity form, generalizing the results obtained by the first author. We have
\begin{itemize}
\item[(iv)] Suppose $\a>-1$, $\w \in L^p$, for some $0<p<\frac{N}{1+\a}$, and the strain tensor $|\p v+\p^\top v| = o(1)$ as $|y| \ra \infty$. Then $v$ is a constant vector.
\end{itemize}

\section{Technical preliminaries}

\subsection{Self-similar equations and pressure}\label{ss:ss}  If $(u,p)$ is a distributional solution to \eqref{ee}, then the pair $(v,q)$ satisfies
 \begin{equation}\label{e:ss}
\frac{1}{1+ \a} y \cdot \n v + \frac{\a}{1+ \a} v = v \cdot \n v + \n q,
\end{equation}
 and  the pressure necessarily satisfies the Poisson equation
\begin{equation}\label{poisson}
\Delta q = - \diver \diver (v \otimes v) = - \p_i \p_j(v_i v_j).
\end{equation}
If $v \in L^p$, $2<p<\infty$ (resp., $L^\infty$) and  $q \in L^{p/2}$ (resp., BMO), then there is only one solution to \eqref{poisson} given by
\begin{equation}\label{q}
q(y) = - \frac{|v|^2}{N} + P.V. \int_{\R^N} K_{ij}(y-z) v_i(z)v_j(z)
\, dz,
\end{equation}
where the kernel is given by
\[
K_{ij}(y) = \frac{N y_i y_j - \d_{i,j}|y|^2}{N \w_N |y|^{N+2}},
\]
and $\w_N = 2\pi^{N/2}(N\Gamma(N/2))^{-1}$ is the volume of the unit ball in $\R^N$. The pressure given by \eqref{q} is referred to as the associated pressure. Unless stated otherwise we will always assume that the pressure is associated, however not for every pair $(v,q)$ solving \eqref{e:ss}, $q$ is given by \eqref{q}. Indeed, let
\begin{equation}\label{noq}
v = \langle 1,0 \rangle, \quad q = \frac{\a}{1+\a} y_1.
\end{equation}
This is a self-similar solution for any $\a > -1 $. Clearly, \eqref{q} does not hold (see \cite{kuk} for the role of such examples in uniqueness of solutions of the Navier-Stokes equation).

The equation in self-similar coordinates \eqref{e:ss} has its own intrinsic scaling -- if $v$ is a solution to \eqref{e:ss}, then
\[
v_\l(y) = \l v(y/\l), \quad q_\l(y) = \l^2 q(y/\l)
\]
is also a solution to the same equation. This suggests that in fact there may exist a non-trivial example of a $1$-homogeneous solution. And indeed, in 2D such an example is provided by
\begin{equation}\label{e:natur}
v(y) = \langle y_1, - y_2 \rangle, \quad q(y) = - y_2^2.
\end{equation}
Another example is the following parallel shear flow
\begin{equation}\label{e:psf}
v(y) = \langle y_2^\a, 0 \rangle, \quad q(y) = \frac{2\a}{(1+\a)^2} y_2^{\a+1},
\end{equation}
which in the case $\a = 1$ specifies to the natural homogeneity. A singular example of a solution of special interest to us is the $\a$-point vortex
\begin{equation}\label{e:point}
v(y) = \frac{y^\perp}{|y|^{\a + 1}}, \quad q(y) = 0.
\end{equation}

The equation for vorticity tensor $\w = \frac{1}{2} \{ \p_i v_j - \p_j v_i \}_{i,j = 1}^N$ in self-similar variables reads
\begin{equation}\label{vor}
\w + \frac{1}{1+\a} y \cdot \n \w = v \cdot \n \w -  \w \varsigma -
\varsigma \w,
\end{equation}
where $\varsigma = \frac{1}{2} \{\p_i v_j + \p_j v_i\}_{i,j = 1}^N $ is the strain tensor.

\subsection{Local energy equality} 
 All our results below hold under the presumption that the
 solution $(u,p)$ is regular enough to satisfy the local energy equality, at least in the region of self-similarity:
\begin{equation}\label{local-en}
\begin{split}
\int_{\R^N} |u(t_2,x)|^2 \f(t_2,x) \, dx - \int_{\R^N} |u(t_1,x)|^2 \f(t_1,x) \, dx = \\
\int_{t_1}^{t_2} \int_{\R^N} |u(t,x)|^2 \f_t(t,x) \, dx \,dt +   \int_{t_1}^{t_2}  \int_{\R^N} ( |u|^2 + 2p) u \cdot \n \f\, dx\, dt,
\end{split}
\end{equation}
where $\f \in C^\infty_0((0,T)\times \R^N)$, and $0<t_1<t_2<T$. It holds trivially for locally smooth finite energy solutions solutions, $u \in C^1_{loc}((0,T)\times \R^N) \cap L^\infty_t L^2_x$. The weakest condition under which \eqref{local-en} is known to hold is a Besov-type regularity of smoothness $1/3$ (see \cite{ccfs,shv-org}). It is not our goal however to pursure the sharpest local condition.

We will only be concerned with the local energy equality on the region of self-similarity. So, let us assume for simplicity and without loss of generality that $x^* = 0$, $\rho_0 = 1$, $T = 0$, while $t>0$. Let us fix a radial test function $\s$, i.e. $\s(x) = \s(|x|)$, such that $\s \geq 0$, $\s(r) = 1$, for $0 \leq r \leq \frac{1}{2}$, and $\s(r) = 0$, for $r >1$. Using $\f = \s$, \eqref{local-en} takes the form
\begin{equation}\label{eneqal}
\|u(t_2) \s \|_2^2  =   \|u(t_1) \s \|_2^2 + \int_{t_1}^{t_2}  \int_{\R^3} ( |u|^2 + 2p) u \cdot \n \s(x) dx dt.
\end{equation}
In self-similar variables the above translates into the following
\begin{equation}\label{eeq1}
\begin{split}
&t_2^{\frac{N-2\a}{1+\a}} \int_{|y| \leq t_2^{-\frac{1}{1+\a}}} |v(y)|^2 \s(y  t_2^{\frac{1}{1+\a}})\, dy = \\
&
t_1^{\frac{N-2\a}{1+\a}} \int_{|y| \leq t_1^{-\frac{1}{1+\a}}} |v(y)|^2 \s(y t_1^{\frac{1}{1+\a}})\, dy\, + \\
&\int_{t_1}^{t_2}  t^{\frac{N-3\a}{1+\a}} \int_{\frac{1}{2}t^{-\frac{1}{1+\a}} \leq |y| \leq t^{-\frac{1}{1+\a}}} (|v|^2 + 2q) v \cdot \n \s(y t^{\frac{1}{1+\a}})\, dy\, dt.
\end{split}
\end{equation}
Changing the order of integration in the last integral and changing notation in the first two with $l_i = t_i^{-1/(1+\a)}$, we obtain the following inequality for all $0<l_1<l_2$,
\begin{equation}\label{en-start}
\begin{split}
&\left| \frac{1}{l_2^{N- 2\a}}  \int_{|y| \leq l_2  } |v(y)|^2 \s(y/l_2) \, dy - 
\frac{1}{l_1^{N- 2\a}}  \int_{|y| \leq l_1 } |v(y)|^2 \s(y/l_1) \, dy \right| \\
& \leq C \int_{l_1/2 \leq |y| \leq l_2 } \frac{|v|^3 + |q||v|}{ |y|^{N+1 - 2\a }}\, dy.
\end{split}
\end{equation}
This inequality will be our starting point in much of what follows.

 \subsection{Global energy equality}
The global energy equality holds under additional $L^3$-integrability condition at infinity.
\begin{theorem} Let $u \in  C^w_t L^2_x \cap L^3_tL^3_x   \cap C^1_{loc}$ be a weak solution to the Euler equations on $\R^N$. Then $u$ conserves energy on $[0,T]$.
\end{theorem}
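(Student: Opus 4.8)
The plan is to run the Constantin--E--Titi style argument in its easy form: since $u\in C^1_{loc}$ the local energy balance holds \emph{pointwise} and classically, so the only thing to control is the energy flux through large spheres, which I will send to zero using the $L^3_tL^3_x$ integrability at infinity. Dotting \eqref{ee} with $u$ and using $\diver u = 0$ gives, on $(0,T)\times\R^N$,
\[
\p_t \frac{|u|^2}{2} + \diver\left[\left(\frac{|u|^2}{2} + p\right) u\right] = 0 .
\]
Fix $0<t_1<t_2<T$ and let $\s_R(x)=\s(x/R)$, where $\s$ is the radial cutoff introduced above ($\s\equiv 1$ on $\{|x|\le \tfrac12\}$, $\s\equiv 0$ on $\{|x|>1\}$). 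Multiplying the identity by $\s_R$ and integrating over $[t_1,t_2]\times\R^N$ — legitimate because on each ball $\overline{B_R}$ the integrand is $C^1$ in space--time — yields
\[
\frac{1}{2}\int_{\R^N}|u(t_2)|^2\s_R\,dx - \frac{1}{2}\int_{\R^N}|u(t_1)|^2\s_R\,dx = \int_{t_1}^{t_2}\int_{\R^N}\left(\frac{|u|^2}{2}+p\right)u\cdot\n\s_R\,dx\,dt .
\]
The strategy is to let $R\to\infty$: the left side should converge to $\tfrac12\bigl(\|u(t_2)\|_2^2-\|u(t_1)\|_2^2\bigr)$ while the right side vanishes.

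The left-hand side is routine: for each fixed $t$ one has $0\le\s_R\le1$ and $\s_R\to1$ pointwise, so dominated convergence against $|u(t)|^2\in L^1_x$ gives $\int|u(t)|^2\s_R\to\|u(t)\|_2^2$. For the flux, note that $\n\s_R$ is supported in the annulus $\{R/2\le|x|\le R\}$ with $|\n\s_R|\lesssim R^{-1}\le 1$ (for $R\ge1$), so
\[
\left|\int_{t_1}^{t_2}\int_{\R^N}\left(\frac{|u|^2}{2}+p\right)u\cdot\n\s_R\,dx\,dt\right|\lesssim \int_{t_1}^{t_2}\int_{|x|>R/2}\bigl(|u|^3+|p|\,|u|\bigr)\,dx\,dt .
\]
The cubic term is harmless: for a.e.\ $t$ the tail $\int_{|x|>R/2}|u(t)|^3\,dx\to0$ as the remainder of the convergent integral $\|u(t)\|_3^3$, and it is dominated by $\|u(t)\|_3^3\in L^1_t$ (since $u\in L^3_tL^3_x$), so the time integral vanishes by dominated convergence.

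The pressure term is the main obstacle, and it is exactly here that the $L^3$ hypothesis is used, so I would treat it carefully. First one must know that the (associated) pressure decays integrably at infinity; this I would get from Calder\'on--Zygmund theory applied to the Poisson equation \eqref{poisson}. Since $u(t)\in L^3_x$, we have $|u(t)|^2\in L^{3/2}_x$, and the double Riesz transform representation \eqref{q} (with $N$ in place of $3$) gives $p(t)\in L^{3/2}_x$ with $\|p(t)\|_{3/2}\lesssim\|u(t)\|_3^2$; it is crucial here that $p$ is the \emph{associated} pressure, so that no additive harmonic part survives to spoil the decay. Then H\"older gives $\|\,|p(t)|\,|u(t)|\,\|_{L^1_x}\lesssim\|u(t)\|_3^3\in L^1_t$, whence $|p|\,|u|\in L^1_{t,x}$; its spatial tail $\int_{|x|>R/2}|p|\,|u|\,dx\to0$ for a.e.\ $t$ and is again dominated by $\|u(t)\|_3^3$, so dominated convergence finishes the flux estimate.

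Combining the two limits, the right-hand side tends to $0$, giving $\|u(t_2)\|_2=\|u(t_1)\|_2$ for all $0<t_1<t_2<T$; the weak continuity $u\in C^w_tL^2_x$ then extends the conclusion to the closed interval, establishing conservation of energy. The delicate point worth flagging is precisely the pressure behaviour at infinity: if the problem only specified $p$ up to a harmonic polynomial the flux need not vanish, so the argument genuinely relies on recovering $p$ through \eqref{q}, for which the integrability $u\in L^3_x$ is what makes the bound $\|p\|_{3/2}\lesssim\|u\|_3^2$ available.
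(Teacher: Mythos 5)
Your proof is correct and follows essentially the same route as the paper: cut off with $\s_R$, observe that $u\in L^3_{t,x}$ forces the associated pressure into $L^{3/2}_{t,x}$ by Calder\'on--Zygmund so that $(|u|^2+2p)u\in L^1_{t,x}$, and conclude that the flux through the annulus $\{R/2\le|x|\le R\}$ vanishes as $R\to\infty$. Your version merely spells out the dominated-convergence details and the role of the associated pressure that the paper leaves implicit.
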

\begin{proof}  Let $\s_R(x) = \s(x/R)$. By the local energy equality we have
\[
\|u(t_2) \s_R \|_2^2 -  \|u(t_1) \s_R \|_2^2 = \int_{t_1}^{t_2} \frac{1}{R} \int_{\R^N} ( |u|^2 + 2p) u \cdot \n \s(x/R) dx dt.
\]
Since $u\in L^3_{t,x}$, then $p\in L^{3/2}_{t,x}$ and hence $( |u|^2 + 2p) u \in L^{1}_{t,x}$. Then, clearly, the integral on the right hand side tends to zero as $R \ra \infty$.
\end{proof}

As an immediate consequence we can eliminate certain self-similar solutions under the global energy law.
\begin{corollary} Suppose $u \in  C^w_t L^2_x \cap L^3_tL^3_x   \cap C^1_{loc}$ is a weak solution to the Euler equations on $\R^N$ with a locally self-similar collapse. If $\a >\frac{N}{2}$ then the collapse does not occur. Otherwise, \eqref{Nalpha} holds.
\end{corollary}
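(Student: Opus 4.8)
The plan is to feed the conclusion of the global energy equality just established — that $\|u(t)\|_2^2$ is constant on $[0,T]$ — into the rescaling computation already sketched around \eqref{Nalpha}. First I would record that energy conservation produces a finite constant $E := \|u(0)\|_2^2 = \|u(t)\|_2^2$ for all $t\in[0,T]$. Since the integrand $|u|^2$ is non-negative, restricting the integral to the ball of self-similarity gives the uniform-in-time bound $\int_{|x-x^*|\leq \rho_0} |u(x,t)|^2\,dx \leq E$, valid for every $t$ up to the blow-up time $T$.

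Next I would insert the self-similar form \eqref{ansatz} into this local bound and change variables. Writing $y = (x-x^*)/(T-t)^{1/(1+\a)}$ and $L = \rho_0\,(T-t)^{-1/(1+\a)}$, the substitution $dx = (T-t)^{N/(1+\a)}\,dy$ combined with the amplitude prefactor $(T-t)^{-2\a/(1+\a)}$ and the identity $(T-t)^{1/(1+\a)} = \rho_0/L$ yields
\[
\frac{1}{L^{N-2\a}} \int_{|y| \leq L} |v(y)|^2\, dy = \rho_0^{-(N-2\a)}\int_{|x-x^*| \leq \rho_0} |u(x,t)|^2\, dx \leq \rho_0^{-(N-2\a)}\, E.
\]
Since $L \to \infty$ monotonically as $t \to T$, this is precisely the growth bound \eqref{Nalpha}, which settles the second ("otherwise") assertion.

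Finally, for the range $\a > \frac{N}{2}$ I would exploit the negative exponent. Here $N-2\a < 0$, so the right-hand side above is $\rho_0^{-(N-2\a)}E\,L^{N-2\a}\to 0$ as $L\to\infty$. Fixing an arbitrary radius $R$ and taking $L > R$ gives $\int_{|y|\leq R}|v|^2\,dy \leq \rho_0^{-(N-2\a)}E\,L^{N-2\a}\to 0$, hence $\int_{|y|\leq R}|v|^2\,dy = 0$ for every $R$, so $v \equiv 0$ on $\R^N$. Thus no nontrivial profile exists and the collapse cannot occur.

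The argument is essentially a direct change of variables, so I do not expect a serious obstacle. The one point requiring care is the passage from the \emph{local} energy bound to \eqref{Nalpha}: one must know that the energy inside the fixed physical ball $|x-x^*|\leq\rho_0$ stays bounded \emph{uniformly} as $t\to T$, and this is exactly what the conservation law supplies (finiteness at a single time would not by itself survive the limit $L\to\infty$). The remainder is just bookkeeping of the scaling exponents and noting that the monotone divergence $L\to\infty$ lets one probe arbitrarily large balls $|y|\leq L$.
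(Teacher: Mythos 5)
Your proof is correct and follows exactly the route the paper intends: the paper states this corollary as an immediate consequence of the energy conservation theorem, with the rescaling computation being precisely the one sketched in the introduction to derive \eqref{Nalpha} from finiteness of the total energy. Your change of variables, the resulting bound $L^{-(N-2\a)}\int_{|y|\le L}|v|^2\,dy \le \rho_0^{-(N-2\a)}E$, and the conclusion $v\equiv 0$ when $N-2\a<0$ are all as the authors intend.
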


As a by-product of our proofs below we show that the conclusions of this corollary hold under only $L^p$-integrability assumption on the self-similar profile $v$. In other words, a self-similar solution even if viewed independently  from the ambient flow still behaves as if it was embedded in a global in space finite energy solution.

\section{Exclusions based on velocity}

\subsection{The energy conservative scaling $\a = \frac{N}{2}$}
As outlined in the introduction, the case of $\a = \frac{N}{2}$ is special since it is the only scaling compatible with the energy conservation law if \eqref{ansatz} was  defined globally in space. What distinguishes it from a pure technical point of view is the absence of weights in front of energy integrals in  the energy balance relation \eqref{en-start}. Our main result for this case is the exclusion of solutions with a power spread.

\begin{theorem} Let $v \in L^2(\R^N)\cap C^1_{loc}$ and the pressure $q$ given by \eqref{q}. Suppose there exists a $\d >0$ and $C,c >0$ such that
\begin{equation}\label{powers}
\frac{c}{|y|^{N+1 - \d}} \leq |v(y) | \leq C |y|^{1-\d},
\end{equation}
for all sufficiently large $y$. Then $v= 0 $.
\end{theorem}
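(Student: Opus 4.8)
The plan is to run the local energy balance \eqref{en-start} in the critical case $\alpha=N/2$, where the weights $l^{N-2\alpha}$ disappear and the flux kernel is exactly $|y|^{-1}$, and to convert it into a self-improving decay estimate for the tail energy $\mathcal{E}(L):=\int_{|y|\ge L}|v|^2\,dy$, which tends to $0$ since $v\in L^2$. Fixing $l_1=L$ and letting $l_2\to\infty$ in \eqref{en-start}, dominated convergence gives $\int|v|^2\sigma(y/l_2)\to\|v\|_2^2$, and since $1-\sigma(\cdot/L)\ge\mathbf{1}_{\{|y|\ge L\}}$ one obtains
\begin{equation*}
\mathcal{E}(L)\ \le\ \|v\|_2^2-\int|v|^2\sigma(y/L)\,dy\ \le\ C\int_{|y|\ge L/2}\frac{|v|^3+|q||v|}{|y|}\,dy\ =:\ R(L).
\end{equation*}
The whole argument reduces to bounding $R(L)$ by a small factor times $\mathcal{E}$ at a comparable scale.

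First I would dispatch the trilinear term: the upper bound in \eqref{powers} gives $|v|^3/|y|\le C|v|^2|y|^{-\delta}$, hence $\int_{|y|\ge L/2}|v|^3/|y|\lesssim L^{-\delta}\mathcal{E}(L/2)$. For the pressure I would use the associated representation \eqref{q}: the local term $-|v|^2/N$ only reproduces a trilinear contribution and is absorbed as above, while the nonlocal part $Q(y)=\mathrm{P.V.}\int K_{ij}(y-z)v_i(z)v_j(z)\,dz$ is a Calder\'on--Zygmund singular integral of $v\otimes v$ that I would estimate dyadically over annuli $A_k=\{|y|\sim 2^k=:R\}$. Splitting $v\otimes v$ into its restriction to $\{R/4\le|z|\le 4R\}$ and the complementary part, the near piece is controlled by $L^2$-boundedness of the singular integral together with the upper bound (bounding $\int_{\{R/4\le|z|\le4R\}}|v|^4\le\sup|v|^2\int|v|^2$), giving $\|Q_{\mathrm{near}}\|_{L^2}\lesssim R^{1-\delta}\mathcal{E}(R/4)^{1/2}$, whereas on $A_k$ the far piece has a nonsingular kernel of size $\lesssim R^{-N}$ and yields the multipole-type bound $|Q_{\mathrm{far}}|\lesssim R^{-N}\|v\|_2^2$. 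Pairing each with $\|v\|_{L^2(A_k)}\le\mathcal{E}(R/2)^{1/2}$ against the weight $|y|^{-1}$ and summing the geometric series produces, for a fixed $c\in(0,1)$ and all large $L$,
\begin{equation*}
\mathcal{E}(L)\ \le\ R(L)\ \lesssim\ L^{-\delta}\,\mathcal{E}(cL)\ +\ \|v\|_2^2\,L^{-1-N/2}\,\mathcal{E}(cL)^{1/2}.
\end{equation*}

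With this self-improving inequality I would bootstrap: if $\mathcal{E}(L)\lesssim L^{-\beta}$ then the inequality upgrades the exponent to $\min\bigl(\beta+\delta,\ \tfrac{\beta}{2}+1+\tfrac{N}{2}\bigr)$. Starting from $\beta=0$ (the crude bound $\mathcal{E}\le\|v\|_2^2$), both maps strictly exceed $\beta$ as long as $\beta<N+2$, the fixed point of the second; hence finitely many iterations give $\mathcal{E}(L)\lesssim L^{-\beta}$ for any prescribed $\beta<N+2$. On the other hand the lower bound in \eqref{powers} forces $\mathcal{E}(L)\ge c^2\int_{|y|\ge L}|y|^{-2(N+1-\delta)}\,dy\gtrsim L^{-(N+2-2\delta)}$, the integral converging precisely because $v\in L^2$ already requires $\delta<N/2+1$. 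Choosing $\beta\in(N+2-2\delta,\,N+2)$, which is nonempty since $\delta>0$, the two estimates read $c'L^{-(N+2-2\delta)}\le\mathcal{E}(L)\le CL^{-\beta}$, i.e. $c'L^{\,\beta-(N+2-2\delta)}\le C$ with strictly positive exponent, which is impossible as $L\to\infty$ unless $\mathcal{E}\equiv 0$. Therefore $\|v\|_2=0$ and $v=0$.

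I expect the pressure estimate to be the main obstacle. One must simultaneously extract a decaying power of $L$ and retain a genuine tail-energy factor $\mathcal{E}(cL)^{1/2}$ rather than the crude $\|v\|_2$, because it is exactly the interplay between the $L^{-\delta}$ gain from the near field and the fixed $L^{-1-N/2}$ multipole rate from the far field that pins the achievable decay just below $L^{-(N+2)}$ --- sharp enough to beat the lower-bound rate $L^{-(N+2-2\delta)}$ for every $\delta>0$. Verifying the Calder\'on--Zygmund splitting rigorously against the only assumed integrability $v\in L^2$ (so $v\otimes v\in L^1$, with the $L^2$ boundedness applied only to the localized near piece) is the delicate point.
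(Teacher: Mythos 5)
Your proof is correct, but it is organized quite differently from the paper's, and the comparison is instructive. The paper takes $l_1=l_2/4$ in \eqref{en-start} to bound the \emph{annulus} energy $\int_{L\le|y|\le2L}|v|^2$ by the flux over a slightly larger annulus (see \eqref{en-L}), and then uses the lower bound in \eqref{powers} \emph{inside} the pressure estimate: for the contributions $q_1,q_3$ coming from $|z|\le L/4$ and $|z|\ge 8L$ it writes $|v|=|v|^2|v|^{-1}\lesssim L^{N+1-\d}|v|^2$, so that every term closes back to $L^{-\d}$ times nearby annulus energies; the resulting six-annulus recursion \eqref{tech11} is iterated $m$ times to give decay $L^{-m\d}$ for every $m$, and the lower bound is then invoked a second time to contradict arbitrarily fast decay. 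You instead send $l_2\to\infty$ (legitimate since $v\in L^2$ makes the left side converge to $\|v\|_2^2$) and work with the monotone tail energy $\mathcal{E}(L)$, and you use the lower bound of \eqref{powers} only once, at the very end: in the pressure estimate you replace the paper's $|v|\lesssim L^{N+1-\d}|v|^2$ trick by Cauchy--Schwarz on the annulus, which turns the far-field (monopole) contribution into the term $\|v\|_2^2L^{-1-N/2}\mathcal{E}(cL)^{1/2}$. The price is that your bootstrap saturates at the fixed point $\b=N+2$ instead of producing arbitrary decay; the gain is a transparent explanation of why the exponent $N+1-\d$ in \eqref{powers} is exactly critical, since a profile saturating the lower bound has $\mathcal{E}(L)\gtrsim L^{-(N+2-2\d)}$ and the contradiction rests precisely on $N+2-2\d<N+2$, both sides degenerating together at $\d=0$. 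Your single self-improving inequality is also cleaner to iterate than the paper's $6^m$-term expansion. The only points that deserve explicit care in a write-up are (a) the limit $l_2\to\infty$ in \eqref{en-start}, where the flux integral over the unbounded region could a priori be infinite (harmless, since the inequality is then vacuous and your subsequent estimates show it is in fact finite), and (b) the justification that the Calder\'on--Zygmund bound is applied only to the compactly supported near piece $(v\otimes v)\mathbf{1}_{\{R/4\le|z|\le4R\}}$, which lies in $L^2$ by the pointwise upper bound in \eqref{powers} — both of which you already flag.
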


A few comments are in order. Example \eqref{e:natur} shows relevance of the upper bound to the natural scaling of the equations,
although of course it has infinite energy. 
The lower bound may seem to be artificial especially given \thm{t:homo} below where homogeneous profiles with decay $|v| \sim |y|^{-\b}$ are excluded for any $\b \geq N/2$. However as we will see from the proof it is essentially a way of dealing with the non-locality of the pressure.

\begin{proof}

We start with the basic energy equality \eqref{en-start}. Using that $\a = \frac{N}{2}$, the factors in front of the energies disappear and we obtain
\begin{equation}\label{en-bound}
 \int_{|y| \leq l_2/2} |v|^2 \, dy \leq \int_{|y| \leq l_1 } |v|^2 \, dy + C \int_{l_1/2 \leq |y| \leq  l_2 } \frac{|v|^3 + |q||v|}{|y|}\, dy.
\end{equation}
Taking $l_1 = L = l_2/4$, we obtain
\begin{equation}\label{en-L}
 \int_{L \leq |y| \leq 2L} |v|^2 \, dy \leq C \int_{\frac{1}{2} L \leq |y| \leq 4  L} \frac{|v|^3 + |q||v|}{|y|} \, dy.
\end{equation}

The proof will now proceed by showing the following claim:  for all $M\in \N$ there exists a $C_M>0$ such that
\[
 \int_{L \leq |y| \leq 2L} |v|^2 dy \leq \frac{C_M}{L^M}.
 \]
 for all $L$ sufficiently large. This immediately runs into contradiction with the lower bound of \eqref{powers}. The exact value of the power $N+1 - \d$ is not important at this point, but it will be crucial in the course of proving the claim.

Using our assumption \eqref{powers} and the energy bound \eqref{en-L} we have
\begin{equation}\label{tech10}
\begin{split}
 \int_{L \leq |y| \leq 2L} |v|^2\, dy &\lesssim \frac{1}{L^{\d}} \int_{\frac{1}{2} L \leq |y| \leq 4  L} |v|^2\, dy \\ &+ \frac{1}{L} \int_{\frac{1}{2} L \leq |y| \leq 4  L} |v||q|\, dy.
\end{split}
\end{equation}
Now our goal is to find suitable bounds on the pressure and the last integral in \eqref{tech10}. Notice that
\begin{equation}\label{omega}
\int_{\S^{N-1}} K_{ij}(\th) d\s(\th) = 0,
\end{equation}
for all $i,j$. Let us split the pressure as follows
\[
q = q_0 + q_1 + q_2 + q_3,
\]
where $q_0$ is the local part of \eqref{q}, and
\[
\begin{split}
q_1(y) &= \int_{|z| \leq L/4} K_{ij}(y - z) v_i(z)v_j(z) \,dz,\\
q_2(y) &= \int_{L/4 \leq |z| \leq 8L} K_{ij}(y - z) v_i(z)v_j(z) \,dz,\\
q_3(y) &= \int_{|z| \geq 8 L} K_{ij}(y - z) v_i(z)v_j(z) \,dz.
\end{split}
\]
Clearly, only estimates on the non-local quantities $q_{i}$ are necessary. Since $|y - z| \sim L$ for all $|z| \leq L/4$ and  $\frac{1}{2} L \leq |y| \leq 4  L$, we have
\[
|q_1(y) | \lesssim \frac{1}{L^N} \int_{|z| \leq L/4} |v|^2 \,dz \leq  \frac{\|v\|_2^2}{L^N}.
\]
Thus, in view of \eqref{powers},
\[
\begin{split}
\frac{1}{L} \int_{\frac{1}{2} L \leq |y| \leq 4  L} |v||q_1|\,dy &\lesssim \frac{1}{L^{N+1}} \int_{\frac{1}{2} L \leq |y| \leq 4  L} |v|^2 |v|^{-1} \,dy \\
&\lesssim \frac{1}{L^\d} \int_{\frac{1}{2} L \leq |y| \leq 4  L} |v|^2\, dy.
\end{split}.
\]
As to $q_2$, we have
\[
\begin{split}
\frac{1}{L} \int_{\frac{1}{2} L \leq |y| \leq 4  L} |v||q_2|\,dy &\leq  \frac{1}{L} \left(\int_{\frac{1}{2} L \leq |y| \leq 4  L} |v|^2 dy \right)^{1/2} \left( \int_{\R^N} |q_2|^2 dy \right)^{1/2}\\
&\lesssim  \frac{1}{L} \left(\int_{\frac{1}{2} L \leq |y| \leq 4  L} |v|^2 dy \right)^{1/2} \left( \int_{L/4 \leq |y| \leq 8L} |v|^4 dy \right)^{1/2}\\
&\lesssim  \frac{1}{L^\d} \left(\int_{\frac{1}{2} L \leq |y| \leq 4  L} |v|^2 dy \right)^{1/2} \left( \int_{L/4 \leq |y| \leq 8L} |v|^2 dy \right)^{1/2}\\
&\lesssim  \frac{1}{L^\d}\int_{L/4 \leq |y| \leq 8L} |v|^2 dy .
\end{split}
\]
And as to $q_3$, we trivially have $|q_3(y)| \lesssim \frac{1}{L^N} \|v\|_2^2$. Thus,
\[
\begin{split}
\frac{1}{L} \int_{\frac{1}{2} L \leq |y| \leq 4  L} |v||q_3|\,dy & \lesssim \frac{1}{L^{N+1}} \int_{\frac{1}{2} L \leq |y| \leq 4  L} |v|\, dy \lesssim \frac{1}{L^\d} \int_{\frac{1}{2} L \leq |y| \leq 4  L} |v|^2 \, dy \\
&= 
\frac{1}{L^\d} \sum_{k=-1}^2 \int_{ 2^{k}L \leq |y| \leq 2^{k+1}L} |v|^2\, dy.
\end{split}
\]
Putting together the obtained estimates into \eqref{tech10} we conclude that there exists a constant $C>0$ such that for all $L$ large enough
\begin{equation}\label{tech11}
\begin{split}
 \int_{L \leq |y| \leq 2L} |v|^2\, dy \leq \frac{C}{L^\d} \sum_{k = -2}^3 \int_{2^kL \leq |y| \leq 2^{k+1}L} |v|^2\, dy.
\end{split}
\end{equation}
Let us now the iterate estimate above $m$ times applying it to each integral in the sum,
\[
\begin{split}
 \int_{L \leq |y| \leq 2L} |v|^2 dy &\leq \frac{C^m}{L^{m\d}} \sum_{k_1,\ldots,k_m = -2}^3  \int_{2^{k_1+\ldots+k_m}L \leq |y| \leq 2^{{k_1+\ldots+k_m}+1}L} |v|^2\, dy\\
 &\lesssim \frac{C_m}{L^{m\d}}.
\end{split}
\]
Since $m$ can be arbitrary, the claim is proved.
\end{proof}

\subsection{The energy non-conservative scaling $\a \neq \frac{N}{2}$}
As we mentioned earlier some cases of non-conservative scaling appear physically relevant. Additionally, in the range $-1<\a <\frac{N}{2}$, a possibly infinite energy of the self-similar profile $v$ is not in contradiction with the finiteness of the global energy as along as \eqref{Nalpha} holds. Our main result in the energy non-conservative scaling is the following.

\begin{theorem}\label{t:off} Suppose $v\in L^p \cap C^1_{loc}$ for some $3 \leq p \leq \infty$, and $q$ is given by \eqref{q}. If $-1< \a  \leq \frac{N}{p}$ or $ \frac{N}{2} < \a \leq  \infty$, then $v = 0$.
\end{theorem}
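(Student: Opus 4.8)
The plan is to run a scale-by-scale bootstrap on the growth of the local energy $g(L):=\int_{|y|\le L}|v|^2\,dy$, driven by the local energy balance \eqref{en-start}, until $g$ is dominated by a strictly negative power of $L$; since $g$ is nondecreasing, this forces $g\equiv 0$ and hence $v=0$. The two ranges of $\a$ enter this scheme only through the sign of the weight exponent $N-2\a$, but the engine is the same.

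First I would record the pressure bound. Since $v\otimes v\in L^{p/2}$ and $p\ge 3$, Calder\'on--Zygmund theory applied to \eqref{poisson} gives $q\in L^{p/2}$ with $\|q\|_{p/2}\lesssim\|v\|_p^2$ (for $p=\infty$ one reads this as $q\in\mathrm{BMO}$ and treats that endpoint separately). Next, decomposing space into dyadic annuli $A_k=\{2^k\le|y|\le 2^{k+1}\}$ and writing $a_k=\int_{A_k}|v|^2$, $b_k=\int_{A_k}|v|^p$ (so $\sum_k b_k=\|v\|_p^p<\infty$, in particular $b_k\to0$), I would estimate the trilinear term in \eqref{en-start} by interpolating $L^3$ between $L^2$ and $L^p$ on each annulus,
\[
\int_{A_k}|v|^3\,dy\le a_k^{\frac{p-3}{p-2}}\,b_k^{\frac{1}{p-2}},
\]
so that $\int_{l/2\le|y|\le L}\frac{|v|^3}{|y|^{N+1-2\a}}\,dy$ becomes a weighted sum $\sum_k 2^{-k(N+1-2\a)}a_k^{\frac{p-3}{p-2}}b_k^{\frac1{p-2}}$ coupling the unknown energies $a_k$ to the summable masses $b_k$. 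Feeding an a priori bound $g(L)\lesssim L^\beta$ (available from H\"older with $\beta_0=N(1-\tfrac2p)$) into this sum and back into \eqref{en-start} produces an improved bound with exponent $\beta\mapsto -1+\frac{p-3}{p-2}\beta$; this affine map has slope in $[0,1)$ and fixed point $-(p-2)<0$, so iterating drives the admissible growth exponent below $0$.

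It remains to make the two ends of \eqref{en-start} cooperate, and here the ranges separate. When $\frac{N}{2}<\a\le\infty$ the prefactor $L^{-(N-2\a)}=L^{2\a-N}$ on the left is large, so fixing the inner scale and sending $L\to\infty$ converts the iterated energy bound directly into $g(L)\lesssim L^{N-2\a}\to0$. When $-1<\a\le \frac{N}{p}$ I would instead use that $v\in L^p$ forces $\phi(L):=L^{-(N-2\a)}\int_{|y|\le L}|v|^2\s(y/L)\,dy\to0$: splitting the dyadic sum for $g(L)$ at a large fixed index and using $\sum_{k>K_0}b_k\to0$ gives $g(L)=o(L^{N-2\a})$, which is exactly what is needed to absorb the outer term in \eqref{en-start} (note $N-2\a\ge N(1-\tfrac2p)>0$ here, and this covers the delicate boundary $\a=\frac{N}{p}$). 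With the outer term gone, \eqref{en-start} yields $\phi(l)\lesssim l^{\mu}$ with $\mu=\frac{p-3}{p-2}\beta-(N+1-2\a)<0$, i.e. $g(l)\lesssim l^{-1+\frac{p-3}{p-2}\beta}$, and the recursion above again pushes the exponent negative.

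The main obstacle is the nonlocality of the pressure: the contribution $\int_{l/2\le|y|\le L}\frac{|q||v|}{|y|^{N+1-2\a}}\,dy$ cannot be estimated annulus-by-annulus from local data, and the naive global bound $\|q\|_{p/2}\lesssim\|v\|_p^2$ yields a fixed, non-improving growth exponent that the bootstrap cannot beat. I expect to resolve this exactly as in the $\a=\frac{N}{2}$ argument given above: split $q=q_0+q_1+q_2+q_3$ relative to each working annulus, using the cancellation \eqref{omega} to gain decay from the near-field $q_1$ and far-field $q_3$ sources, and controlling the local part $q_2$ by the Calder\'on--Zygmund bound on comparable annuli so that it re-enters as an energy term $a_k$ on nearby scales. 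This puts the pressure term on the same footing as the trilinear term, so it obeys the same recursion and the bootstrap closes; the endpoints $p=\infty$ (BMO pressure) and $\a=\infty$ are then handled as limiting versions of the same estimates.
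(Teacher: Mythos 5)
Your proposal is correct and follows essentially the same route as the paper: the local energy inequality \eqref{en-start}, interpolation of the $L^3$ term between $L^2$ and $L^p$, a near/far splitting of the pressure (Calder\'on--Zygmund for the comparable scales, dyadic decay for the far field, exactly the content of Lemma~\ref{l:p}), and the affine bootstrap $\b\mapsto \a_p\b-1$ with $\a_p=\frac{p-3}{p-2}$ driven to a negative exponent, with the two ranges of $\a$ separated precisely as you describe (outer term killed by the $L^p$ tail when $\a\le N/p$, prefactor $L^{N-2\a}\to0$ when $\a>N/2$) and the $p=\infty$ endpoint via BMO averages. The only cosmetic differences are your annulus-wise bookkeeping with $a_k,b_k$ versus the paper's ball-growth exponents, and the fixed-point phrasing of the recursion.
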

The scaling $\a = N/p$ is notable for the fact that the $L^p$-norm of the solution is conserved. If $\a <N/p$ it deflates as $t \ra 0$, and if $\a >N/p$ it inflates. The sharpness of this scaling is suggested by the $\a$-point vortex \eqref{e:point}. Even though it fails to satisfy the required regularity near the origin,  it does belong to $L^p$ near infinity precisely when $2/p < \a$.  He's solutions in exterior domains with asymptotic decay $|v(y)| \sim \frac{1}{|y|}$, hence in $L^3_{weak}$, are suggestive of criticality of $\a = N/p$ as well.

In the following we consider only the case when $p<\infty$ leaving the technicalities of the case $p=\infty$ to Section~\ref{sss:infty}.

\subsubsection{Proof in the range $-1< \a  \leq \frac{N}{p}$} In this range we can eliminate the $l_2$-integral from \eqref{en-start}. Our claim is
\[
\frac{1}{l_2^{N- 2\a}}  \int_{|y| \leq l_2  } |v(y)|^2 \s(y/l_2) \, dy \ra 0,
\]
as $l_2 \ra \infty$. Indeed, for a fixed large $M>0$ and $l_2>M$, we have
by the H\"older inequality,
\[
\begin{split}
 \frac{1}{l_2^{N- 2\a}}  \int_{|y| \leq l_2  } |v(y)|^2 \s(y/l_2) \, dy & \leq 
 \frac{1}{l_2^{N- 2\a}}  \int_{|y| \leq M  } |v(y)|^2 \, dy \\
 &+ l_2^{2\a - 2N/p} \left( \int_{M\leq |y| \leq l_2} |v|^p\, dy \right)^{2/p}.
\end{split}
\]
Letting $l_2 \ra 0$, the first integral disappears, and we have
\[
\leq \left( \int_{M\leq |y| } |v|^p\, dy \right)^{2/p} \ra 0,
\]
as $M \ra \infty$. So, \eqref{en-start} takes the form (using that $\s = 1 $ on $|y|<1/2$, and replacing $l_1/2 $ with $L$)
\begin{equation}\label{en-start2}
\begin{split}
\frac{1}{L^{N- 2\a}}  \int_{|y| \leq L } |v|^2  \, dy  \leq C \int_{L \leq |y| } \frac{|v|^3 + |q||v|}{ |y|^{N+1 - 2\a }}\, dy.
\end{split}
\end{equation}
By the H\"older inequality we obtain
\[
\frac{1}{L^{N- 2\a}}  \int_{|y| \leq L } |v|^2  \, dy  \leq C L^{2\a - 1 - 3N/p} \left( \int_{L \leq |y| } (|v|^3 + |q||v|)^{p/3}\, dy\right)^{3/p},
\]
and hence,
\begin{equation}\label{vbp}
 \int_{|y| \leq L } |v|^2  \, dy  \leq L^{\b_p}, \text{ where } \b_p = N-1 - \frac{3N}{p}.
\end{equation}
If $\b_p <0$, then the proof is finished by sending $L \ra \infty$. Otherwise,  by interpolation, we have
\begin{equation}\label{l3}
\int_{|y| \leq L} |v|^3 dy \leq CL^{\b_p\a_p}, \text{ where }\a_p = \frac{p-3}{p-2}.
\end{equation}
Coming back repeatedly to the inequality \eqref{en-start2} we will be able to bootstrap on the growth of energy based now on a better estimate for the $L^3$-norms \eqref{l3}. But first we have to establish the corresponding estimates on the growth of the pressure. 
\begin{lemma}\label{l:p} Let
\begin{equation}\label{l2l}
\int_{|y| \leq L} |v|^2 dy \leq CL^{a_2},
\end{equation}
and
\begin{equation}\label{l3l}
\int_{|y| \leq L} |v|^3 dy \leq CL^{a_3},
\end{equation}
hold for all large $L$, and $a_2 <N$, $\frac{3a_2 - N}{2} \leq a_3$. Then
\begin{equation}\label{p}
\int_{|y| \leq L} |q|^{3/2} dy \leq CL^{a_3}.
\end{equation}
\end{lemma}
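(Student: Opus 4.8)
The plan is to use the explicit representation \eqref{q} of the associated pressure and split it into a local and a nonlocal part, $q = q_0 + \tilde q$, where $q_0(y) = -|v(y)|^2/N$ and $\tilde q(y) = \mathrm{P.V.}\int_{\R^N} K_{ij}(y-z) v_i(z) v_j(z)\,dz$. The local part is immediate: since $|q_0|^{3/2} = N^{-3/2}|v|^3$, the hypothesis \eqref{l3l} gives $\int_{|y|\le L}|q_0|^{3/2}\,dy \lesssim \int_{|y|\le L}|v|^3\,dy \lesssim L^{a_3}$. Everything then reduces to the nonlocal singular integral $\tilde q$, which I would handle by splitting the \emph{source} variable $z$ into a near region $|z|\le 2L$ and a far region $|z|>2L$, writing $\tilde q = \tilde q_{\mathrm{near}} + \tilde q_{\mathrm{far}}$ accordingly.

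For the near part I would invoke Calder\'on--Zygmund theory. The kernel $K_{ij}$ is of convolution type, homogeneous of degree $-N$, and has mean zero on the sphere by \eqref{omega}, so the associated operators $T_{ij}$ are bounded on $L^{3/2}(\R^N)$. Applying them to $g_{ij} = v_iv_j\,\mathbf 1_{\{|z|\le 2L\}}$ and using $|v_iv_j|^{3/2}\le |v|^3$ together with \eqref{l3l} yields
\[
\int_{|y|\le L}|\tilde q_{\mathrm{near}}|^{3/2}\,dy \le \int_{\R^N}\Big|\sum_{i,j}T_{ij}g_{ij}\Big|^{3/2}\,dy \lesssim \int_{|z|\le 2L}|v|^3\,dz \lesssim L^{a_3}.
\]

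For the far part the kernel is no longer singular on the ball: for $|y|\le L$ and $|z|>2L$ one has $|y-z|\ge |z|/2$, whence $|K_{ij}(y-z)|\lesssim |z|^{-N}$. Decomposing $\{|z|>2L\}$ into dyadic shells $\{2^kL\le |z|\le 2^{k+1}L\}$, $k\ge 1$, and using \eqref{l2l} on each shell gives
\[
\int_{2^kL\le|z|\le 2^{k+1}L}\frac{|v|^2}{|z|^N}\,dz \lesssim \frac{1}{(2^kL)^N}\int_{|z|\le 2^{k+1}L}|v|^2\,dz \lesssim (2^kL)^{a_2-N}.
\]
Because $a_2<N$ the exponent $a_2-N$ is negative, so summing the geometric series over $k\ge1$ produces the pointwise bound $|\tilde q_{\mathrm{far}}(y)|\lesssim L^{a_2-N}$ uniformly for $|y|\le L$. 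Integrating over the ball,
\[
\int_{|y|\le L}|\tilde q_{\mathrm{far}}|^{3/2}\,dy \lesssim L^N\cdot L^{\frac32(a_2-N)} = L^{\frac{3a_2-N}{2}} \lesssim L^{a_3},
\]
where the last inequality is exactly the hypothesis $\tfrac{3a_2-N}{2}\le a_3$. Combining the three contributions proves \eqref{p}.

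The two structural hypotheses are precisely the points where care is needed, and I expect the far-field estimate to be the delicate one: the condition $a_2<N$ is what makes the dyadic tail summable (otherwise the far contribution would be dominated by the largest scales and fail to decay), while $\frac{3a_2-N}{2}\le a_3$ is exactly the matching of exponents that lets the far part be absorbed into the target bound $L^{a_3}$. The near-field step, by contrast, is routine once one observes that the correct tool is $L^{3/2}$-boundedness of the singular integral rather than a crude pointwise kernel bound, which would fail because of the non-integrable singularity of $K_{ij}$ at the origin.
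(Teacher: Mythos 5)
Your proposal is correct and follows essentially the same route as the paper: the same splitting of $q$ into the local part, a near part handled by Calder\'on--Zygmund boundedness of the convolution with $K_{ij}$ on $L^{3/2}$, and a far part handled by a dyadic decomposition in $z$ where $a_2<N$ gives summability and $\frac{3a_2-N}{2}\le a_3$ closes the exponent count. No gaps.
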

In order not to verify the assumptions on the exponents every time, we simply note that they are verified for any couple $a_2$, $a_3$ with
\begin{equation}\label{a}
a_2 \leq N - \frac{2N}{p}, \quad a_3 = a_2 \a_p.
\end{equation}
Clearly, $a_2=\b_p$, $a_3=\b_p\a_p$ is such a couple.
\begin{proof}
Let, as before, $q = q_0 + \tilde{q}$, where $q_0$ is the local and $\tilde{q}$ is the non-local part of the pressure. We can split
\[
\begin{split}
\int_{|y| \leq L} |\tilde{q}|^{3/2} dy \leq \int_{|y| \leq L} \left| \int_{|z| \leq 2L}  K_{ij}(y-z) v_i(z) v_j(z) dz \right|^{3/2} dy + \\
+  \int_{|y| \leq L} \left| \int_{|z| \geq 2L} K_{ij}(y-z) v_i(z) v_j(z)  dz \right|^{3/2} dy = A+B.
\end{split}
\]
By the standard boundedness,
\[
A \leq C \int_{|z| \leq 2L} |v|^3 dz \leq C L^{a_3},
\]
as required. As to $B$, we use a dyadic decomposition,
\[
\begin{split}
B \leq \int_{|y| \leq L} \left(   \sum_{k=1}^\infty  \int_{ 2^k L \leq |z| \leq 2^{k+1}L} \frac{1}{|y-z|^N} |v(z)|^2 dz \right)^{3/2} dy.
\end{split}
\]
Given that $|y-z| \sim |z|$, we continue
\[
\begin{split}
B \leq L^N \left(   \sum_{k=1}^\infty   \frac{1}{2^{Nk} L^N} \int_{ 2^k L \leq |z| \leq 2^{k+1}L} |v(z)|^2 dz \right)^{3/2} \\
\leq \frac{C}{L^{N/2}} \left( \sum_{k=1}^\infty  \frac{2^{ka_2} L^{a_2}}{2^{Nk}} \right)^{3/2}
\leq C L^{\frac{3a_2 - N}{2}} \leq CL^{a_3},
\end{split}
\]
the latter holds due to imposed assumptions.
\end{proof}
Now using the obtained estimates \eqref{l3} and \eqref{p} in \eqref{en-start2} we obtain 
\[
\begin{split}
\frac{1}{L^{N- 2\a}}  \int_{|y| \leq L } |v|^2  \, dy  &\leq \frac{C}{L^{N+1-2\a}} \sum_{k=0}^\infty \frac{1}{2^{k(N+1-2\a)}} \int_{2^kL \leq |y| \leq 2^{k+1}L} (|v|^3 + |v||q|)\, dy\\
&\leq L^{\b_p\a_p - N - 1 +2\a} \sum_{k=0}^\infty 2^{k(\b_p\a_p - N - 1 +2\a)}.
\end{split}
\]
Notice that in the range $\a \leq N/p$ the power in the series is negative. Hence,
\begin{equation}\label{l22}
\int_{|y| \leq L} |v|^2 dy \leq CL^{\b_p \a_p - 1} \text{ and }\int_{|y| \leq L} |v|^3 dy \leq CL^{\b_p \a^2_p - \a_p}.
\end{equation}
Once again, the new exponents satisfy \eqref{a}, hence
\begin{equation}\label{pp}
\int_{|y| \leq L} |q|^{3/2} dy \leq CL^{\b_p \a^2_p - \a_p}.
\end{equation}
Substituting this into \eqref{en-start2} we obtain
\begin{equation}\label{l222}
\int_{|y| \leq L} |v|^2 dy \leq CL^{\b_p \a^2_p - \a_p -  1},
\end{equation}
and so on. Noting that on each step the assumptions on the exponents are satisfied (even improved), we arrive at
\begin{equation}\label{ln}
\int_{|y| \leq L} |v|^2 dy \leq CL^{\b_p \a^n_p - \a^{n-1}_p -... -   1}.
\end{equation}
For $n$ sufficiently large the power will become negative implying that $v = 0$.

\subsubsection{Proof in the range $\a >N/2$}
Starting from the same energy equality \eqref{en-start} we obtain
\[
\frac{1}{l_2^{N-2\a}} \int_{|y| \leq l_2/2} |v|^2 dy \lesssim \frac{1}{l_1^{N-2\a}} \int_{|y| \leq l_1 } |v|^2 dy +  \int_{l_1/2 \leq |y| \leq  l_2 } \frac{|v|^3 + |q||v|}{|y|^{N+1-2\a}}\, dy.
\]
Let us fix $l_1=2$ and $l_2 = 2 L >> 2$. Then
\begin{equation}\label{en10}
\int_{|y| \leq L} |v|^2\, dy  \lesssim L^{N-2\a} + L^{N-2\a} \int_{1 \leq |y| \leq  2L } \frac{|v|^3 + |q||v|}{|y|^{N+1-2\a}}\, dy,
\end{equation}
and by the H\"older,
\[
\lesssim L^{N-2\a} + L^{N-2\a} \left( \int_{1<|y|< 2L} \frac{1}{|y|^{(N+1-2\a)p/(p-3)}} \, dy \right)^{(p-3)/p}.
\]
Since $N-2\a<0$, the only case we have to consider is when $(N+1-2\a)p/(p-3) <N$. In this case the estimate above gives
\[
\int_{|y| \leq L} |v|^2 dy \lesssim L^{N-2\a} + L^{\b_p}.
\]
If $\b_p<0$ the proof is finished. Otherwise, we obtain
\begin{equation}\label{boot1}
\int_{|y| \leq L} |v|^2\, dy \lesssim L^{\b_p}, \text{ and } \int_{|y| \leq L} |v|^3\, dy \lesssim L^{\b_p\a_p}.
\end{equation}
We are in a position to intitiate the bootstrap argument as before, but with some  modifications. Pluging \eqref{boot1} in \eqref{en10} we find
\[
\begin{split}
\int_{|y| \leq L} |v|^2\, dy  & \lesssim L^{N-2\a} + \frac{1}{L} \sum_{k=-1}^{[\log_2 L]} 2^{k(N+1-2\a)} \int_{L/2^{k+1} < | y| < L/2^k } (|v|^3 + |q||v|)\, dy \\
&\lesssim L^{N-2\a} + L^{\b_p\a_p - 1} \sum_{k=-1}^{[\log_2 L]} 2^{k(N+1-2\a - \b_p\a_p)}. 
\end{split}
\]
If the power $N+1-2\a - \b_p\a_p \geq 0$, we obtain
\[
\lesssim L^{N-2\a} + L^{N-2\a} \log_2 L \ra 0, \text{ as } L \ra \infty.
\]
In this case the proof is over. Otherwise, we obtain
\[
\lesssim L^{N-2\a}  + L^{\b_p\a_p - 1}.
\]
If $\b_p\a_p - 1<0$, the proof is over. Otherwise, 
\[
\int_{|y| \leq L} |v|^2 \, dy \lesssim L^{\b_p \a_p - 1}, \text{ and }\int_{|y| \leq L} |v|^3 \, dy \lesssim L^{\b_p \a^2_p - \a_p}.
\]
The iteration will certainly terminate at a step when the  power 
\[
\b_p\a^n_p - \a_p^{n-1}-...-1
\] 
becomes negative, or earlier.

\subsubsection{Implications of the proof to the range $N/p < \a \leq N/2$}\label{sss:imply}
The proof above yields the following corollary.
\begin{corollary}\label{c:cons1} Suppose $ \frac{N}{p} < \a \leq \frac{N}{2}$. Then one has
\begin{equation}\label{goal}
\int_{|y| \leq L} |v|^2\, dy \lesssim L^{N - 2\a}.
\end{equation}
\end{corollary}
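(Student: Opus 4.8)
The plan is to reuse, almost verbatim, the bootstrap machinery developed for the range $\a > N/2$, observing that the rescaled energy inequality \eqref{en10} was obtained from \eqref{en-start} by pure homogeneity (fixing $l_1=2$, $l_2=2L$) and therefore holds for \emph{every} $\a$, in particular for $\frac{N}{p}<\a\le\frac{N}{2}$. The one structural change is that now $N-2\a\ge 0$, so the leading term $L^{N-2\a}$ on the right of \eqref{en10} no longer tends to zero as $L\to\infty$; it is exactly the growth rate \eqref{goal} we wish to establish. Consequently, instead of driving the energy exponent all the way down to force $v=0$, I only need to iterate until the exponent is dominated by $N-2\a$, at which point \eqref{en10} closes and yields \eqref{goal}.

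First I would fix the starting point. By H\"older, $\int_{|y|\le L}|v|^2\,dy\le\|v\|_p^2\,L^{N-2N/p}$, so \eqref{l2l} holds with $a_2=N-\frac{2N}{p}$, and interpolating against $v\in L^p$ as in \eqref{l3} gives \eqref{l3l} with $a_3=a_2\a_p$. This couple satisfies \eqref{a}, hence \lem{l:p} bounds $\int_{|y|\le L}|q|^{3/2}\,dy\lesssim L^{a_3}$. I would then estimate the integral in \eqref{en10} by a dyadic decomposition over the annuli $2^k\le|y|\le 2^{k+1}$, $0\le k\le[\log_2 L]$. On each annulus $|y|^{-(N+1-2\a)}\sim 2^{-k(N+1-2\a)}$, while $\int_{|y|\le 2^{k+1}}|v|^3\,dy\lesssim 2^{ka_3}$ directly and $\int_{|y|\le 2^{k+1}}|q||v|\,dy\lesssim 2^{ka_3}$ by H\"older with the conjugate exponents $3/2$ and $3$ together with the pressure bound. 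Summing gives $\int_{1\le|y|\le 2L}\frac{|v|^3+|q||v|}{|y|^{N+1-2\a}}\,dy\lesssim\sum_{k}2^{k(a_2\a_p-(N+1-2\a))}$.

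The conclusion then follows from a dichotomy on the sign of the exponent $a_2\a_p-(N+1-2\a)$. If it is negative, the geometric series converges, so the integral is $\lesssim 1$ and \eqref{en10} gives precisely $\int_{|y|\le L}|v|^2\,dy\lesssim L^{N-2\a}$, which is \eqref{goal}. If it is nonnegative, the series is $\lesssim L^{a_2\a_p-(N+1-2\a)}$ and \eqref{en10} upgrades the bound to $\int_{|y|\le L}|v|^2\,dy\lesssim L^{a_2\a_p-1}$; the new couple $(a_2\a_p-1,(a_2\a_p-1)\a_p)$ again obeys \eqref{a}, so the step may be repeated. Because $\a_p<1$, the update $a_2\mapsto a_2\a_p-1$ lowers the exponent by $a_2(1-\a_p)+1\ge 1$ at each pass, while the very alternative that keeps us iterating forces $a_2\a_p-1\ge N-2\a\ge 0$. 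Hence the exponent cannot keep dropping by at least $1$ while staying $\ge N-2\a$, so after finitely many iterations it must fall into the first (convergent) alternative, producing \eqref{goal}.

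The main point to watch is the borderline exponent $a_2\a_p=N+1-2\a$, where the dyadic sum produces an extra factor $\log L$ rather than a clean power. This is harmless: $L^{N-2\a}\log L\lesssim L^{N-2\a+\e}$ for any $\e>0$ feeds back into the next iteration with a negligibly perturbed exponent, and the strict decrease by at least $1$ per step still guarantees termination. The only genuine bookkeeping is to confirm that \eqref{a}, and therefore the hypotheses of \lem{l:p}, persist along the entire iteration; this holds automatically because the exponents only decrease from the admissible initial value $a_2=N-\frac{2N}{p}$ and never violate $a_2<N$.
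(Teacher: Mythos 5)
Your argument is correct and is essentially the paper's own proof: the paper likewise reruns the $\a>N/2$ bootstrap verbatim (noting that \eqref{en10} holds for all $\a$), terminates once the dyadic sum converges to yield the exponent $N-2\a$, and disposes of the logarithmic borderline by replacing $\log L$ with $L^\e$ and performing one more pass. Your only deviations are cosmetic --- an inside-out rather than outside-in dyadic decomposition and the slightly weaker but equally admissible starting exponent $a_2=N-\tfrac{2N}{p}$ in place of $\b_p$ --- neither of which changes the argument.
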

There is only one place of the argument which needs extra attention. That is if at some point we run into the logarithmic bound
\[
\int_{|y| \leq L} |v|^2\, dy \lesssim L^{N - 2\a} \log_2 L.
\]
Then for any $\e>0$ we have
\[
\int_{|y| \leq L} |v|^2\, dy \lesssim L^{N - 2\a+\e}, \text{ and }\int_{|y| \leq L} |v|^3\, dy \lesssim L^{(N - 2\a+\e)\a_p}.
\]
The conditions \eqref{a} are still satisfied for small $\e$, so the pressure has the analogous growth bound. Returning to \eqref{en10} and performing dyadic splitting of the integral as before we obtain
\[
\int_{|y| \leq L} |v|^2 dy   \lesssim L^{N-2\a} + L^{ (N-2\a + \e)\a_p - 1  }\sum_{k=0}^{[ \log_2 L ]} 2^{k( N+1-2\a - (N-2\a+\e)\a_p)}.
\]
The power in the sum is strictly positive. So, we obtain \eqref{goal}.

\subsubsection{\thm{t:off} in the case $p = \infty$}\label{sss:infty}
Only a few minor modifications are needed to extend the above argument to the case $v \in L^\infty$, $q \in BMO$. In the case $\a \leq 0$  we start from \eqref{eeq1} and subtract from $q$ the averages over dyadically divided time intervals. This, after changing the order as in \eqref{en-start} results in the following inequality
(in place of \eqref{en-start2})
\begin{equation}\label{en-start2}
\begin{split}
\frac{1}{L^{N- 2\a}}  \int_{|y| \leq L } |v|^2  \, dy  \leq C \sum_{k=1}^\infty \int_{2^k L \leq |y| \leq 2^{k+1} L } \frac{|v|^3 + |q - \bar{q}_k ||v|}{ |y|^{N+1 - 2\a }}\, dy,
\end{split}
\end{equation}
where $\bar{q}_k = \frac{1}{Vol(2^k L \leq |z| \leq 2^{k+1} L)} \int_{2^k L \leq |z| \leq 2^{k+1} L } q(z) dz$.
Using that 
\[
 \int_{2^k L \leq |y| \leq 2^{k+1} L } |q(y) - \bar{q}_k| dy \lesssim (2^k L)^N \|q\|_{BMO}
 \]
 we immediately obtain \eqref{vbp} with $\b_\infty = N-1$ as expected. Note that again the constants $\b_\infty$ and $\a_\infty = 1$ satisfy the requirements of \lem{l:p}. From this point on the argument proceeds as before.
 
 In the case $\a > 0$ a similar argument replaces \eqref{en10} with 
\[
\int_{|y| \leq L} |v|^2\, dy  \lesssim L^{N-2\a} + L^{N-2\a} \sum_{k=-1}^{[\log_2 L]} \int_{L/2^{k+1} < | y| < L/2^k }\frac{|v|^3 + |q - \bar{q}_k ||v|}{|y|^{N+1-2\a}}\, dy,
\]
where $\bar{q}_k = \frac{1}{Vol( L/2^{k+1} \leq |z| \leq  L/2^k )} \int_{ L/2^{k+1} \leq |z| \leq  L/2^k } q(z) dz$. The rest of the argument goes as before.

\section{Exclusions based on vorticity}\label{s:vor}

The condition in terms of vorticity that excludes a non-trivial blow-up stated and proved in \cite{cha} involves a requirement on decay at infinity in the sense that all $L^p$-norms for $0<p<p_0$ are finite. In this section we will eliminate solutions under a much weaker condition.
\begin{theorem}\label{t:vor}
 Suppose $v\in C^1_{loc}(\R^N)$ is a solution with $\a > -1$ satisfying the following conditions.
\begin{itemize}
\item[(i)] $| \varsigma(y) |=o(1)$ as $|y| \ra \infty$,
\item[(ii)] $\w \in L^p$, for some $0<p<\frac{N}{1+\a}$.
\end{itemize}
Then, $v$ is a constant vector field.
\end{theorem}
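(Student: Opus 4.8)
The plan is to show that the vorticity tensor $\w$ vanishes identically; once this is known, the conclusion follows from a soft elliptic argument. Indeed, $\w=0$ forces $\n v$ to be symmetric, so $\n v=\varsigma$, and then each component $v_j$ is harmonic because $\Delta v_j=\sum_i\p_i\p_j v_i=\p_j(\diver v)=0$. Since $\n v=\varsigma=o(1)$ at infinity by (i), each entry $\p_i v_j$ is a bounded harmonic function on $\R^N$ tending to $0$ at infinity, hence identically zero by Liouville. Thus the entire difficulty is to deduce $\w\equiv 0$ from (i) and (ii).

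To this end I would run a localized $L^p$ energy estimate on the vorticity equation \eqref{vor}. Writing the drift as $b=v-\frac{1}{1+\a}y$, equation \eqref{vor} reads $\w=b\cdot\n\w-\w\varsigma-\varsigma\w$. The decisive structural fact is that $b$ has constant divergence, $\diver b=\diver v-\frac{N}{1+\a}=-\frac{N}{1+\a}$. Testing against $|\w|^{p-2}\w$ times a radial cutoff $\f_R(y)=\f(|y|/R)$ and integrating by parts in the transport term, using $(b\cdot\n\w):|\w|^{p-2}\w=\frac1p\,b\cdot\n|\w|^p$, the divergence of $b$ produces the identity
\[
\left(\frac{N}{p(1+\a)}-1\right)\int|\w|^p\f_R=\frac1p\int|\w|^p\,b\cdot\n\f_R+\int(\w\varsigma+\varsigma\w):|\w|^{p-2}\w\,\f_R.
\]
Hypothesis (ii), $p<\frac{N}{1+\a}$, is exactly what makes the coefficient $\frac{N}{p(1+\a)}-1$ strictly positive: the compressibility of the rescaling drift beats the order-zero amplification coming from the bare $\w$ term on the left of \eqref{vor}.

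Next I would pass to the limit and absorb the remaining terms. Splitting $b=v-\frac{1}{1+\a}y$, the commutator term $\frac1p\int|\w|^p\,b\cdot\n\f_R$ has a harmless rescaling part: $y\cdot\n\f_R$ is bounded and supported in $R\le|y|\le 2R$, so it is controlled by $\int_{R\le|y|\le2R}|\w|^p\to0$. The order-zero term is bounded by $C\int|\varsigma|\,|\w|^p$, which is small on an exterior region because $|\varsigma|=o(1)$ by (i). To turn the estimate into the conclusion $\w=0$ rather than a mere finite bound, I would run the computation on an exterior domain $\{|y|>\rho\}$ with $\rho$ large: there $|\varsigma|$ is as small as desired, and on the inner sphere the drift points inward, $b\cdot\hat{n}=v\cdot\hat{n}-\frac{\rho}{1+\a}<0$, so the inner boundary term carries the favorable sign and forces $\w\equiv0$ for $|y|>\rho$. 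Finally, $\w=0$ on the exterior propagates to all of $\R^N$ along the characteristics of $b$: on any trajectory $\dot X=b(X)$ the matrix $\w(X(s))$ solves the linear homogeneous ODE $\frac{d}{ds}\w=\w+\w\varsigma+\varsigma\w$, and since the backward flow of the contracting field $b$ carries every point out to $\{|y|>\rho\}$, uniqueness for this ODE gives $\w\equiv0$ everywhere.

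The main obstacle is the velocity part of the commutator together with the sign of the inner boundary term, both of which require a growth bound on $v$: one needs $|v(y)|$ to grow strictly slower than $|y|$, so that $\frac1R\int_{R\le|y|\le2R}|\w|^p\,|v|\to0$ and so that $b\cdot\hat{n}<0$ holds on an entire large sphere. This bound is not automatic from $\w\in L^p$ alone, but should follow by integrating $\n v=\w+\varsigma$ along rays: the strain contributes $o(|y|)$ because $|\varsigma|=o(1)$, while the vorticity contributes $\int_0^{|y|}|\w(s\hat{y})|\,ds$, to be controlled in an averaged spherical sense using $\w\in L^p$. Making this growth control uniform enough to guarantee the inward-drift sign on a full sphere of large radius is the delicate point; the remaining steps are a routine integration by parts and limiting procedure.
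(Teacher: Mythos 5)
Your overall mechanism is the same as the paper's: test the vorticity equation with $|\w|^{p-2}\w$, exploit that the rescaling drift $-\frac{1}{1+\a}y$ has constant negative divergence so that the condition $p<\frac{N}{1+\a}$ produces the strictly positive coefficient $\frac{N}{p(1+\a)}-1$, absorb the stretching term on an exterior region using $|\varsigma|=o(1)$, conclude $\w=0$ for large $|y|$, then finish with a Liouville argument. However, two of your steps have genuine gaps. First, the velocity growth bound. You correctly identify that you need control of the flux terms, but you then aim for $|v(y)|=o(|y|)$ for the \emph{full} velocity and propose to extract the vorticity contribution along rays from $\w\in L^p$ ``in an averaged spherical sense.'' This cannot work across the whole admissible range: $p$ may be any number in $(0,\frac{N}{1+\a})$, in particular $p<1$, and an $L^p$ bound with small $p$ gives no control whatsoever on line or spherically averaged integrals of $|\w|$. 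The missing observation --- which is exactly how the paper closes this step --- is that only the \emph{radial} component $v_r=v\cdot\hat{y}$ ever enters: your cutoff $\f_R$ is radial, so $v\cdot\n\f_R$ involves only $v_r$, and the flux through the inner sphere is $b\cdot\hat{n}=v_r-\frac{\rho}{1+\a}$. Writing $v(y)=v(0)+\int_0^1\n v(ty)\cdot y\,dt$ and contracting with $y$, the antisymmetric part drops out since $y\cdot\w\cdot y=0$, so $|v_r(y)|\leq |v(0)|+|y|\sup_{t}|\varsigma(ty)|=o(|y|)$ using hypothesis (i) alone. No control of $\w$ is needed for this, and your ``delicate point'' disappears.

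Second, the propagation of $\w=0$ from $\{|y|>\rho\}$ to all of $\R^N$. Your argument via backward characteristics of the autonomous field $b=v-\frac{1}{1+\a}y$ fails: since $\diver b=-\frac{N}{1+\a}<0$ and $b$ points inward on large spheres, $b$ necessarily has a nonempty compact invariant set inside $\{|y|\leq\rho\}$ (at the very least rest points where $v(y)=\frac{y}{1+\a}$, e.g.\ possibly the origin), and points on such trapped orbits are never carried out to the exterior region by the backward flow, so the ODE uniqueness argument says nothing there. At a rest point the equation only gives $\w=-\w\varsigma-\varsigma\w$, which does not force $\w=0$ since $\varsigma$ need not be small in the interior. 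The paper handles this step by invoking the result of \cite{cha}, whose mechanism is different: one returns to the time-dependent flow, where the support of the physical vorticity is transported by a volume-preserving flow map; if $\w$ vanished outside a ball in self-similar variables the support would shrink to a point as $t\to T$, contradicting conservation of its volume unless it is empty. You need either to quote such a result or to supply an argument that covers the trapped set of $b$; as written, this step does not close.
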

We note that He's examples \cite{he-ext}, although in different
settings, with $|\w| \sim \frac{1}{|y|^2}$ in 3D and $\a = 1$
correspond to $\w \in L^p$ for all $p>\frac{N}{1+\a}=\frac{3}{2}$. It points to the sharpness of our condition (ii). Furthermore, the value of $p =  \frac{N}{1+\a}$ appears naturally critical for the fact that the vorticity of the self-similar solution preserves this particular $L^p$-norm. Let us recall that for a similar reason the exponent $p^* = 
\frac{N}{\a}$ is critical for velocity in \thm{t:off} (i). The two are conjugate through the Sobolev embedding. Indeed, if $v \ra 0$ at infinity, $-1<\a\leq N-1$, then $\w \in L^p$ implies $v \in L^{p^*}$. This brings us back in agreement with the range of \thm{t:off} (i), although the end-point case cannot be excluded here.

\begin{proof} From (i) by the Fundamental Theorem of Calculus, the radial component of velocity is
\begin{equation}\label{ftc}
|v_r(y)|=o(|y|), \text{ as } |y| \ra \infty.
\end{equation}
Indeed, we have
\[
v(y) = v(0) + \int_0^1 \n v(ty) \cdot y \,dt.
\]
Then
\[
v_r(y) = v(y) \cdot \frac{y}{|y|} = v(0)\cdot  \frac{y}{|y|} + \frac{1}{|y|}\int_0^1 y \cdot \varsigma(ty)\cdot y \, dt,
\]
and the claim follows. Observe
$$\infty> \|\w\|_p^p =\int_{0} ^\infty \int_{|y|=r} |w |^p\, dS_r\, dr. $$
Hence, there
exists a sequence $R_j\uparrow \infty$ such that
$$R_j
\int_{|y|=R_j} |w |^p dS_{R_j} \to 0 \quad \mbox{as $j\to
\infty$}.
$$
 We multiply \eqref{vor} by
$\w|\w|^{p-2}$ and write it in the form
 \bb\label{vor1}
\begin{split}
 |\w|^p +\frac{1}{p(\a +1)}\mathrm{ div }\, (y|\w|^p) - \frac{N}{p(\a +1)}|\w|^p \\
 =\frac{1}{p} \mathrm{div} \,(v |\w|^p) -  \hat{\varsigma} |\w|^{p},
\end{split}
\ee
where $\hat{\varsigma} = (\w \varsigma \cdot \w + \varsigma \w \cdot \w)|\w|^{-2}$.
Let us fix an $R>0$, integrate \eqref{vor1} over the annulus $\{ R<|y|< R_j \}$, and apply the divergence theorem to have
\[
\begin{split}
 \left(\frac{N}{p(\a+1)} -1\right) \int_{R<|y|<R_j} |\w|^p dy
  +\frac{R}{p(\a+1)} \int_{|y|=R} |\w|^p dS_R \\
 - \frac{R_j}{p(\a+1)} \int_{|y|=R_j} |\w|^p
  dS_{R_j}\\
 =\int_{R<|y| <R_j} \hat{\varsigma} |\w|^p dy + \frac{1}{p}\int_{|y|=R} v_r|\w|^pdS_R  -\frac{1}{p}\int_{|y|=R_j} v_r |\w|^pdS_{R_j}.
\end{split}
\]
 Then, passing $j\to \infty$, one obtains
 \begin{align*}
 &\left(\frac{N}{p(\a+1)} -1\right) \int_{|y|>R} |\w|^p dy
  +\frac{R}{p(\a+1)} \int_{|y|=R} |\w|^p dS_R  \\
  &=\int_{|y| >R} \hat{\varsigma} |\w|^p dy+ \frac{1}{p}\int_{|y|=R} v_r|\w|^pdS_R
\intertext{and by choosing $R$ sufficiently large, and using (i) and \eqref{ftc}, we can ensure    }
&\leq \frac12\left( \frac{N}{p(\a+1)} -1\right) \int_{|y|>R} |\w|^p dy + \frac{R}{2p(\a+1)} \int_{|y|=R} |\w|^p dS_R.
\end{align*}
Consequently,
 $$
\int_{|y|>R} |\w|^p dy = \int_{|y|=R} |\w|^p dS_R=0,
$$
and hence, $\w=0$ on $\{ y\in \Bbb R^3\, |\,
  |y|>R\}$. Now we apply the result of \cite{cha} to conclude $\w=0$ on $\Bbb
  R^N$.
   Then there exists a harmonic function $h$ such that $v = \n h$. By (i), the Hessian matrix $\n \n h$ is bounded and vanishes at infinity. Since each entry is harmonic, by the Liouville Theorem, $\n\n h = 0$, and therefore $h$ is a quadratic polynomial. But, then from the condition $|\n h| = o(|y|)$, $\n h$ is constant.

  \end{proof}

\subsection{Homogeneous near infinity solutions} Given the plethora of 2D homogeneous examples in Section~\ref{ss:ss} it is natural to ask whether one can find a locally smooth self-similar profile homogenous near infinity. 
We say that a field $v \in C^1_{loc}(\R^N)$ is homogeneous near infinity if
\begin{equation}\label{homo}
v(y) = \frac{V(y |y|^{-1})}{|y|^\b}
\end{equation}
holds for all $y$ large enough and for some $V \in C^1(\S^{N-1}; \R^N)$.

\begin{theorem}\label{t:homo} Suppose $v$ is a homogeneous near infinity solution and any of these conditions are satisfied
\begin{itemize}
\item[(i)] $0<\b<\a$,
\item[(ii)] $-1<\a < \b$,
\item[(iii)] $ \a = \b = \frac{N}{2}$.
\end{itemize}
Then $v=0$, except in the case $\b =0$ which implies that $v$ is constant.
\end{theorem}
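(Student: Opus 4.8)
The plan is to feed the homogeneity ansatz \eqref{homo} into the self-similar equation \eqref{e:ss} and exploit a mismatch of homogeneity degrees. Since $v$ is exactly homogeneous of degree $-\b$ for $|y|>R_0$, Euler's identity gives $y\cdot\n v = -\b v$, so \eqref{e:ss} reduces on the exterior $\{|y|>R_0\}$ to
\begin{equation}\label{star}
\frac{\a-\b}{1+\a}\, v = v\cdot\n v + \n q.
\end{equation}
I would treat the cases $\a\neq\b$ and $\a=\b=N/2$ separately, since the coefficient $\frac{\a-\b}{1+\a}$ vanishes precisely in the latter.

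For cases (i) and (ii) the coefficient is nonzero (in (i) one has $\a-\b>0$, in (ii) one has $\a-\b<0$, and $1+\a>0$ throughout). I would take the curl of \eqref{star} to remove the pressure, obtaining $\frac{\a-\b}{1+\a}\w = \curl(v\cdot\n v)$ on the exterior. Here $\w$ is exactly homogeneous of degree $-\b-1$, while $\curl(v\cdot\n v)$ is exactly homogeneous of degree $-2\b-2$; since $\b\neq-1$ in both ranges these two degrees differ, and an exactly homogeneous field equal to one of a different degree must vanish. As the coefficient is nonzero this forces $\w=0$ on $\{|y|>R_0\}$. I would then invoke the continuation result of \cite{cha} (used already in the proof of \thm{t:vor}) to upgrade this to $\w=0$ on all of $\R^N$, so that $v=\n h$ with $h$ harmonic and entire (using $\diver v=0$). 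Each $\p_i h$ is then entire harmonic and homogeneous of degree $-\b$ near infinity. If $\b>0$ it decays at infinity, so Liouville's theorem gives $\p_i h\equiv0$ and $v=0$; this disposes of all of (i) and of the part of (ii) with $\b>0$. If $-1<\b<0$ (the remaining part of (ii)) the growth rate $|y|^{-\b}$ is strictly sublinear, forcing each $\p_i h$ to be constant, which is incompatible with genuine degree $-\b\neq0$ homogeneity unless $v=0$. The borderline $\b=0$ makes each $\p_i h$ a bounded entire harmonic function, hence constant, so $v$ is a constant vector --- exactly the stated exception.

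For case (iii), $\a=\b=N/2$, the curl argument is inconclusive because the coefficient in \eqref{star} vanishes, so I would return to the local energy inequality \eqref{en-bound}. Homogeneity of degree $-N/2$ makes $\int_{|y|\le L}|v|^2\,dy \sim \|V\|_{L^2(\S^{N-1})}^2\,\log L$, which diverges as $L\to\infty$. On the other hand the flux integral on the right of \eqref{en-bound} converges: the integrand $\frac{|v|^3+|q||v|}{|y|}$ decays like $|y|^{-3N/2-1}$ (up to a logarithm), using that the associated pressure \eqref{q} obeys $|q|\lesssim|y|^{-N}\log|y|$ on the exterior. Fixing $l_1$ and letting $l_2\to\infty$ then yields a contradiction unless $\|V\|_{L^2(\S^{N-1})}=0$, i.e.\ $v=0$ near infinity; the argument of the previous paragraph (now with $\w=0$ near infinity from the outset) finishes $v=0$ on $\R^N$.

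The main obstacle is the non-local pressure, which enters only in case (iii): I must show that $q$ given by \eqref{q} inherits the decay of $|v|^2$, namely $|q|\lesssim|y|^{-N}\log|y|$, which requires splitting the singular integral in \eqref{q} into the inner region $|z|\le|y|/2$, the diagonal region $|z|\sim|y|$, and the outer region $|z|\ge2|y|$ and estimating each, with special attention to the borderline homogeneity degree $-2\b=-N$ that produces the logarithmic factor. The curl-based cases (i)--(ii) are comparatively robust, since they never see the pressure.
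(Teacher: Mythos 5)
Your proof is correct, but for cases (i) and (ii) it takes a genuinely different route from the paper. The paper handles (i) by playing the a priori energy growth bound of Corollary~\ref{c:cons1} (or \thm{t:off} when $\a>N/2$) against the exact growth $\int_{L\le|y|\le 2L}|v|^2\sim L^{N-2\b}\|V\|^2_{L^2(\S^{N-1})}$, and handles (ii) by checking that $\w\in L^p$ for some $\frac{N}{1+\b}<p<\frac{N}{1+\a}$ and invoking \thm{t:vor}. Your homogeneity--degree count in the curl of \eqref{e:ss} (equivalently in the vorticity equation \eqref{vor}: the left side is homogeneous of degree $-\b-1$, the right side of degree $-2\b-2$) replaces both of these with a single pressure-free argument, and in fact applies to every $\a\neq\b$ with $\b\neq -1$ --- including the range $-1<\b\le 0<\a$ that the paper's closing remark leaves open precisely because its energy method needs the pressure bound \eqref{postq}, which your curl argument never sees. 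Your conclusion in that extra range is consistent with the paper's examples: \eqref{noq} is the permitted constant at $\b=0$, and \eqref{e:natur} lives exactly at the excluded degree $\b=-1$ where the two homogeneities coincide. Both your argument and the paper's then pass through the same two imported steps, namely the continuation result of \cite{cha} upgrading $\w=0$ near infinity to $\w\equiv 0$ (you should note that its hypotheses hold here: $|v|=o(|y|)$ and $\n v$ bounded since $\b>-1$, exactly as when the paper invokes it) and the Liouville argument for $v=\n h$; your case analysis of the Liouville step over $\b>0$, $\b=0$, $-1<\b<0$ is complete and recovers the stated exception at $\b=0$. For case (iii) your argument is the paper's in spirit --- logarithmic divergence of $\int_{|y|\le L}|v|^2$ against a bounded flux --- but where the paper gets $v\in L^2$ from Corollary~\ref{c:cons1}, you bound the flux directly via the pointwise decay $|q|\lesssim |y|^{-N}\log|y|$; that estimate is true and follows from the splitting you describe (inner, diagonal, outer regions of the kernel in \eqref{q}, using $|\n v|\lesssim|y|^{-N/2-1}$ and the mean-zero property \eqref{omega} on the diagonal piece), but it is a lemma you must actually prove, so your case (iii) is not shorter than the paper's, only more self-contained.
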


\begin{proof} 

In the case (i), since $\b >0$,  $v \in L^p$ for all $p > p_0$. If, in addition $\a >N/2$, then an application of \thm{t:off} concludes the proof. Otherwise, by Corollary \ref{c:cons1}, \eqref{goal} holds. On the other hand,
\[
\int_{L\leq |y| \leq 2 L} |v|^2\ dy \sim L^{N-2\b}  \int_{\S^{N-1}} |V(\th)|^2 dS(\th) 
\]
which necessitates $\b \geq \a$, unless $V = 0$. If $V = 0$, however, then \thm{t:vor} or the result of  \cite{cha} applies to find $v = \n h$ for some harmonic function $h$. Since $h =const$ near infinity, $h$ is constant throughout
by the Liouville Theorem, which implies $v = 0$.

In the case (ii) we have $|\n v| \sim \frac{1}{|y|^{\b+1}}$. Since $-1<\a < \b$, there exists a $p> 0$ with $\frac{N}{1+ \b} <p <\frac{N}{1+ \a}$. For this $p$, $\w \in L^p$, and \thm{t:vor} applies. Note that only in the case $\b = 0$ the constant velocity may be different from zero.

In the case (iii) Corollary \ref{c:cons1} implies $v \in L^2$. However for any $M>0$, 
\[
\int_{L\leq |y| \leq M L} |v|^2\ dy = \log M  \int_{\S^{N-1}} |V(\th)|^2 dS(\th).
\]
This implies $V = 0$ and the argument proceeds as before.

\end{proof}

Let us note that the main obstacle for extending \thm{t:homo} to the remaining case $-1 < \b \leq 0$ and $\b < \a$  is not inapplicability of Corollary~\ref{c:cons1}, but rather the lack of the corresponding bound
\begin{equation}\label{postq}
|q(y)| \lesssim |y|^{-2\b}
\end{equation}
for large $y$. Example \eqref{noq} demonstrates that \eqref{postq} may in fact fail for some solutions. But if bound \eqref{postq} is postulated then from the energy equality we obtain
\[
 \frac{1}{L^{N -2\a}} \int_{|y| \leq  L/2} |v|^2\, dy \leq\frac{1}{l^{N -2\a}} \int_{|y| \leq l} |v|^2\, dy + \int_{ l/2 \leq |y| \leq L}  \frac{|v|^3 + |v||q|}{|y|^{N+1 - 2\a}} \, dy.
\]
By a direct computation, with $l$ fixed, and $L$ large,
\[
\begin{split}
L^{N - 2\b} \|V\|_{L^2(\S^{N-1})}^2 \lesssim  \int_{|y| \leq  L/2} |v|^2\, dy &\lesssim L^{N-2\a} \\
&+ L^{N-2\a} \int_{c l \leq |y| \leq L} \frac{1}{|y|^{N+1 - 2\a+3\b}} \, dy.
\end{split}
\]
If $N+1 - 2\a+3\b \geq N$, then the above implies
\[
L^{N - 2\b}\|V\|_{L^2(\S^{N-1})}^2  \lesssim L^{N-2\a} \log L,
\]
and hence $V = 0$. Otherwise,
\[
L^{N - 2\b}\|V\|_{L^2(\S^{N-1})}^2 \lesssim  L^{N-2\a} + L^{N -1 -3\b},
\]
again, since $\b>-1$, implying $V=0$.


\end{document}